\theoremstyle{plain}
\newtheorem{thm}{Theorem}[section]
\newtheorem{prop}[thm]{Proposition}
\newtheorem{cor}[thm]{Corollary}
\theoremstyle{definition}
\newtheorem{defn}[thm]{Definition}
\newtheorem{definition}[thm]{Definition}
\newtheorem{rmk}[thm]{Remark}
\newcommand{\C}{\mathbb{C}}
\newcommand{\mC}{\mathbb{C}}
\newcommand{\R}{\mathbb{R}}
\newcommand{\mP}{\mathbb{P}}
\newcommand{\p}{\mathbb{P}}
\newcommand{\mG}{\mathbb{G}}
\newcommand{\G}{\mathbb{G}}
\newcommand{\LL}{\mathcal{L}}
\newcommand{\codim}{\mathop{\rm codim}\nolimits}
\newcommand{\GL}{\mathop{\rm GL}\nolimits}
\newcommand{\rank}{\mathop{\rm rank}\nolimits}
\newcommand{\Sym}{\mathop{\rm Sym}\nolimits}
\newcommand{\Eig}{\mathop{\rm Eig}\nolimits}
\newcommand{\Bl}{\mathop{\rm Bl}\nolimits}
\newcommand{\Ima}{\mathop{\rm Im}\nolimits}
\newcommand*{\de}{\partial}
\newcommand{\rvline}{\hspace*{-\arraycolsep}\vline\hspace*{-\arraycolsep}}
\title{Equations of tensor eigenschemes}
\author{Valentina Beorchia, Francesco Galuppi and Lorenzo Venturello}
\address{Dipartimento di Matematica e Geoscienze, Universit\`a di Trieste\\
via Valerio 12/1, 34126 Trieste, Italy}
\email{beorchia@units.it}
\address{Institute of Mathematics of the Polish Academy of Sciences\\
\'Sniadeckich 8, 00-656 Warsaw, Poland}
\email{francesco.galuppi@impan.pl}
\address{Royal Institute of Technology\\
Lindstedtsv\"gen 25, SE-100 44 Stockholm, Sweden}
\email{lven@kth.se}
\subjclass[2020]{Primary 15A18, 14M12; Secondary 13P25, 15A69}
\begin{document}

\maketitle

\begin{abstract} We study schemes of tensor eigenvectors from an algebraic and geometric viewpoint. We characterize determinantal defining equations of such eigenschemes via linear equations in their coefficients, both in the general and in the symmetric case. We give a geometric necessary condition for a 0-dimensional scheme to be an eigenscheme.
\end{abstract}

\section{Introduction}

Tensor spectral theory is an active area of research. Its applications range from hypergraphs theory to quantum physics, from dynamical systems to polynomial optimization, from signal processing to medical imaging. A reference for these and many more applications is \cite{QiChenChen2018}. In this article we study \emph{eigenvectors of tensors}. There are several notions of tensor eigenvectors as introduced independently in \cite{Lim} and \cite{Qi}, all extending the case of matrices. In the present paper we shall focus on the definition \cite[Section 2.2.1]{QZ}. In this context it is not
restrictive to consider partially symmetric tensors of order $d$, that is on elements of $\Sym^{d-1}\C^{n+1}\otimes\C^{n+1}$. 

If we choose a basis for $\C^{n+1}$, then we can identify a partially symmetric tensor $T$ with a tuple $(g_0,\dots,g_n)$ of homogeneous polynomials of degree $d-1$. With such a choice, an \emph{eigenvector} of $T$ is a nonzero vector $v\in\C^{n+1}$ such that
\begin{equation}\label{eq: definizione di autovettore}
(g_0(v), \ldots, g_n(v))=\lambda v\end{equation} for some constant $\lambda$. We observe that such vectors are called \emph{E-eigenvectors} in \cite[Section 2.2.1]{QZ}. Since  
property \eqref{eq: definizione di autovettore} is preserved under scalar multiplication, it is natural to regard eigenvectors as points in $\p^n$; hence the name \emph{eigenpoints} instead of eigenvectors.

Motivated by \cite[Question 16]{RS}, we are interested in the geometry of eigenpoints of tensors in the framework of complex projective geometry. One of the reasons is that being an eigenpoint of a tensor is an algebraic condition, described by the vanishing of minors of a suitable matrix - see \Cref{def:eigenscheme}. Hence, eigenpoints of a tensor form a scheme.

There are several challenging questions about eigenschemes, their dimension, their degree, their configurations and their defining equations. 
In this paper we extend some of the results obtained in \cite{BGV} in the case $n=2$ to arbitrary dimension. For the convenience of the reader, we summarize here our main contributions:
\begin{itemize}
\item In \Cref{thm: resolution of eigenscheme ideals} we explicitly compute the minimal graded free resolution of the defining ideal of a general eigenscheme. 
\item In \Cref{prop: equations partially symmetric} and \Cref{thm: equations symmetric} we characterize determinantal equations of eigenschemes of tensors and symmetric tensors, thereby giving a positive solution to \cite[Conjecture 3.13]{BGV}. These theorems allow us to design an algorithm which checks if a tuple of homogeneous polynomials is the tuple of determinantal equations of an eigenscheme. The second algorithm checks whether, given a set of points $Z\subseteq\p^n$, there exists a tensor $T$, possibly symmetric, whose eigenscheme contains $Z$. If the answer is positive, it explicitly reconstructs $T$. 
\item We prove that the variety parametrizing eigenschemes of symmetric tensors is rational and we give a birational parametrization. This is the content of \Cref{pro: Eig e' razionale}.
\item Finally, in \Cref{pro: no kd points on a curve} we give a geometric necessary condition for a $0$-dimensional scheme to be the eigenscheme of a tensor. This gives a partial answer to \cite[Question 16]{RS}.
\end{itemize}

\ackn 
Beorchia (ORCID 0000-0003-3681-9045) is a member of GNSAGA of INdAM and is supported by the fund Universit\`a degli Studi di Trieste - FRA 2022, by the MIUR Excellence Department Project awarded to the DMG of Trieste, 2018-2023, and by the Italian MIUR funds, PRIN project Moduli Theory and Birational Classification (2017), P.i. U. Bruzzo.

Galuppi (ORCID 0000-0001-5630-5389) is supported by the National Science Center, Poland, project ``Complex contact manifolds and geometry of secants'', 2017/26/E/ST1/00231.

Venturello (ORCID 0000-0002-6797-5270) is supported by the G\"{o}ran Gustafsson foundation.

\section{Definitions and first results}\label{section: syzygies}
In this section we define eigenschemes and discuss their algebraic invariants. 

\begin{definition}
\label{def:eigenscheme}
Let $T= (g_0, \ldots, g_n)$ be a partially symmetric tensor. The \emph{eigenscheme} of $T$ is the closed subscheme $E(T)\subseteq\p^n$ defined by the $2 \times 2$ minors of
	\[\left(
	\begin{matrix}
	x_0 & x_1 & \dots & x_n \\
	g_0 & g_1 & \dots & g_n
	\end{matrix}\right).
	\]
These minors are called \emph{determinantal generators} of the ideal $I_{E(T)}$. When $T$ is symmetric, that is $T$ is identified with a homogeneous polynomial $f$, we have $g_i=\de_if$ and we denote its eigenscheme by $E(f)$.
\end{definition}
\begin{rmk}\label{rmk: scelta di una base ortonormale}\label{rmk: dipende dalla scelta di una base}
The property of being an eigenvector 
is not invariant under linear transformations. For this reason, when we speak of \textquotedblleft the eigenpoints of a tensor $T$\textquotedblright\ we are abusing terminology. In order to avoid ambiguities, we choose once and for all a basis consisting of real vectors  
that are orthonormal with respect to the Euclidean scalar product in $\R^{n+1}$. With this choice we identify the space $\Sym^d\C^{n+1}$ of symmetric tensors with the space $\C[x_0,\dots,x_n]_d$ of degree $d$ homogeneous polynomials. In the same way we identify $\Sym^{d-1}\C^{n+1}\otimes\C^{n+1}$ with $(\Sym^{d-1}\C^{n+1})^{\oplus (n+1)}$. In order to remember this important choice, we always think of partially symmetric tensors as tuples of polynomials. Once the basis is chosen, we can safely compute eigenvectors as in \eqref{eq: definizione di autovettore}.
\end{rmk}

It is possible to define the eigenscheme of a tensor which is not partially symmetric, as in \cite[Definition 1.1]{CartSturm} or in \cite[Section 1]{ASS}. However, with the choice of a basis described in Remark \ref{rmk: dipende dalla scelta di una base}, it is apparent from the definition that for every tensor $T$ there exists a partially symmetric tensor $T^\prime$ such that $E(T^\prime)=E(T)$. For this reason, it is not restrictive to consider partially symmetric tensors.

\begin{definition}
\label{def: phi} We define the integer $w(n,d)$ as
\[w(n,d)=\frac{(d-1)^{n+1}-1}{d-2},
\]
for every $n\geq 1$ and $d\geq 3$. When $d=2$, we set $w(n,2)=n+1$.
\end{definition}
The number $w(n,d)$ is the number of eigenpoints of a general tensor in $(\Sym^{d-1}\C^{n+1})^{\oplus (n+1)}$, as proven in \cite[Theorem 1.2]{CartSturm}.
\begin{thm}
    Let $n\geq 1$, $d\geq 2$ and let $T\in (\Sym^{d-1}\C^{n+1})^{\oplus (n+1)}$ be a general tensor. Then $E(T)\subseteq \p^{n}$ is a $0$-dimensional scheme of degree $w(n,d)$.
\end{thm}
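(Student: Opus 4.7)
The plan is to realise $E(T)$ as the zero scheme of a section of a suitable vector bundle on $\p^n$ and then combine Bertini with a Chern-class computation.

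First I would set up the geometric interpretation. The defining condition $(g_0(v),\dots,g_n(v))=\lambda v$ says that the vector $(g_0(v),\dots,g_n(v))$ lies in the tautological line through $v$. Twisting the Euler sequence by $\mathcal{O}_{\p^n}(d-2)$ gives
\[
0\longrightarrow \mathcal{O}_{\p^n}(d-2)\longrightarrow \mathcal{O}_{\p^n}(d-1)^{\oplus(n+1)}\longrightarrow T_{\p^n}(d-2)\longrightarrow 0.
\]
The tuple $(g_0,\dots,g_n)$ is a global section of $\mathcal{O}_{\p^n}(d-1)^{\oplus(n+1)}$; pushing it forward through the quotient yields a section $s_T\in H^0(T_{\p^n}(d-2))$. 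By construction $s_T$ vanishes at $[v]$ exactly when $(g_0(v),\dots,g_n(v))$ lies in the kernel line spanned by $v$, so $Z(s_T)=E(T)$ scheme-theoretically (the $2\times2$ minors of the matrix in \Cref{def:eigenscheme} are precisely the equations of this zero locus in local coordinates).

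Next I would argue 0-dimensionality. Since $H^1(\mathcal{O}_{\p^n}(d-2))=0$ for $n\ge 1$ and $d\ge 2$, the map on global sections $\Sym^{d-1}\C^{n+1}{}^{\oplus(n+1)}\twoheadrightarrow H^0(T_{\p^n}(d-2))$ is surjective, so a general $T$ produces a general section of $T_{\p^n}(d-2)$. The bundle $T_{\p^n}(d-2)$ is globally generated and has rank $n=\dim\p^n$, hence Bertini's theorem for vector bundles guarantees that a generic section vanishes in expected codimension $n$; consequently $E(T)$ is $0$-dimensional.

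It then remains to compute $\deg E(T)=c_n(T_{\p^n}(d-2))$. From the twisted Euler sequence,
\[
c(T_{\p^n}(d-2))=\frac{(1+(d-1)H)^{n+1}}{1+(d-2)H},
\]
so the degree is the coefficient of $H^n$, namely $\sum_{j=0}^{n}\binom{n+1}{j}(d-1)^j(-(d-2))^{n-j}$. Setting $a=d-1$, $b=d-2$ (so $a-b=1$) and expanding $1=((d-1)-(d-2))^{n+1}$ by the binomial theorem isolates the $j=n+1$ term $(d-1)^{n+1}$ and gives $1-(d-1)^{n+1}=-(d-2)\,c_n(T_{\p^n}(d-2))$, yielding $c_n(T_{\p^n}(d-2))=\frac{(d-1)^{n+1}-1}{d-2}=w(n,d)$. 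The case $d=2$ is separate and reduces to the classical identity $c_n(T_{\p^n})=n+1=w(n,2)$.

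The only genuine obstacle is the first step: identifying the scheme cut out by the $2\times 2$ minors with the zero locus of $s_T$ on the nose (not just set-theoretically). This follows from a local computation showing that on the affine chart $\{x_i\neq 0\}$ the image section of $T_{\p^n}(d-2)$ is represented by the $n$-tuple $(x_ig_j-x_jg_i)_{j\ne i}$, so the ideals agree; the Bertini step and the Chern-class arithmetic are then routine.
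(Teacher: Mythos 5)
Your proof is correct, but note that the paper does not prove this statement at all: it is quoted from Cartwright--Sturmfels \cite[Theorem 1.2]{CartSturm}, and your argument (realizing $E(T)$ as the zero scheme of a section of $T_{\p^n}(d-2)$ via the twisted Euler sequence, applying Bertini for globally generated bundles, and computing $c_n(T_{\p^n}(d-2))=\frac{(d-1)^{n+1}-1}{d-2}$) is essentially the standard proof given there. The only point worth adding is that nonemptiness of the zero locus, needed to pass from ``codimension $\ge n$'' to ``$0$-dimensional of degree $w(n,d)$,'' follows from $c_n(T_{\p^n}(d-2))=w(n,d)\neq 0$.
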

Since eigenschemes have an algebraic definition, it is natural to study algebraic properties of their ideals. For instance, we are interested in their minimal free resolution. In general this can be quite complicated, because eigenschemes can have positive dimension and they can even have components of different dimensions. However, the general tensor has a $0$-dimensional eigenscheme, and the same is true for the general symmetric tensor - see \cite[Section 5.2]{Abo}. For this reason we focus on the minimal graded free resolution of the ideal of a $0$-dimensional eigenscheme. We accomplish that in Theorem \ref{thm: resolution of eigenscheme ideals}.

Let $R = \mathbb{C}[x_0,\dots,x_n]$, let $g_0,\dots,g_n\in R_{d-1}$ and consider the matrix
\begin{equation}\label{main_matrix}
	M = \begin{pmatrix}
			x_0 & x_1 & \dots & x_n\\
			g_0 & g_1 & \dots & g_n
		\end{pmatrix}.
\end{equation}
We regard $M$ as a map $R^{n+1}\to R(-1)\oplus R(-d+1)$ of free graded $R$-modules. In order to study the ideal of minors of $M$, we recall the definition of the Eagon-Northcott complex.

\begin{defn}\label{def:EN} Let $\alpha:F\to G$ a map of finitely generated $R$-modules, with 
$$
f=\rank(F)\mbox{ and }g=\rank(G).
$$
The \emph{Eagon-Northcott complex} $EN(\alpha)_\bullet$ is the complex defined by
	\[\begin{cases}
EN(\alpha)_1=\bigwedge^{g}G & \\
EN(\alpha)_i= (\Sym_{i-1}(G))^*\otimes\bigwedge^{i+g-1}F & \mbox{ for every }2\leq i \leq f-g+1,
	\end{cases}
	\]
with differentials as given in \cite[Section A2H]{Eisenbud}.
\end{defn}
In our setting $F=R^{n+1}$, $G=R(1)\oplus R(d-1)$ and $\alpha$ is the map defined by left multiplication with $M$. The shifts are such that the map $\alpha:F\to G$ with $(x_0,\dots,x_n)\mapsto M(x_0,\dots,x_n)^T$ is homogeneous and they induce a shift in the modules $EN(\alpha)_i$.

\begin{thm}\label{thm: resolution of eigenscheme ideals}
Let $n$ and $d$ be positive integers such that $d\ge2$ and let $R = \mathbb{C}[x_0,\dots,x_n]$. Let $I$ be the ideal generated by the $2\times 2$ minors of a matrix $M$ as in \eqref{main_matrix} and let $V(I)$ be the zero locus of $I$.
If $\dim V(I)=0$, then $I$ is saturated and the minimal graded free resolution
$$F_{n}\to \ldots \to F_1\to I\to 0$$
of $I$ as an $R$-module satisfies
	\[
		F_{i}\cong \bigoplus_{j=1}^{i} R(-j(d-2)-i-1)^{\binom{n+1}{i+1}}.
	\]
\end{thm}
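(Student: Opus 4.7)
The plan is to realise the desired resolution as the Eagon--Northcott complex $EN(\alpha)_{\bullet}$ associated to the map $\alpha\colon R^{n+1}\to R(1)\oplus R(d-1)$ induced by $M$. Since $M$ has $g=2$ rows and $f=n+1$ columns, the general bound on the height of the ideal of maximal minors gives $\mathrm{ht}\,I\le f-g+1=n$. The hypothesis that $V(I)\subseteq\mathbb{P}^{n}$ is $0$-dimensional forces $\mathrm{ht}\,I=n$, so this bound is attained. Because $R$ is Cohen--Macaulay, grade equals height for $I$, so the Eagon--Northcott acyclicity theorem applies: the complex $EN(\alpha)_{\bullet}$ is exact and resolves $R/I$, and truncating the final copy of $R$ then yields a free resolution of $I$ itself.

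Next I would extract the graded shifts summand by summand. With $F=R^{n+1}$ concentrated in internal degree $0$ and $G=R(1)\oplus R(d-1)$, one has $\bigwedge^{i+1}F\cong R^{\binom{n+1}{i+1}}$, and the standard decomposition of symmetric powers of a direct sum gives
\[
\mathrm{Sym}_{i-1}G\;\cong\;\bigoplus_{k=0}^{i-1} R\bigl(k+(i-1-k)(d-1)\bigr).
\]
Dualising and twisting by $R(-d)$, the shift $\bigl(\bigwedge^{2}G\bigr)^{*}$ needed to make the Eagon--Northcott differentials homogeneous of degree $0$, yields
\[
F_i\;\cong\;\bigoplus_{k=0}^{i-1} R\bigl(-k-(i-1-k)(d-1)-d\bigr)^{\binom{n+1}{i+1}}.
\]
The substitution $j=i-k$ rewrites the exponent as $-j(d-2)-i-1$ with $j$ running over $1,\dots,i$, recovering the formula in the statement. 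The boundary case $i=1$ is handled directly: the $\binom{n+1}{2}$ generating $2\times 2$ minors all have degree $d$, which matches $F_1=R(-d)^{\binom{n+1}{2}}$.

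Finally, minimality is automatic because every entry of the Eagon--Northcott differentials is built from the $x_i$'s and $g_j$'s, all of positive degree, and hence lies in the irrelevant ideal $\mathfrak{m}=(x_0,\dots,x_n)$; no unit entries appear, so the resolution cannot be shortened. For saturation, the resolution of $R/I$ has length $n$, so the Auslander--Buchsbaum formula gives $\mathrm{depth}(R/I)=(n+1)-n=1$. In particular $\mathfrak{m}\notin\mathrm{Ass}(R/I)$, which is exactly the statement that $I$ coincides with its saturation. I expect the main obstacle to be the shift bookkeeping in the middle paragraph: the individual twists are forced by homogeneity, but packaging them into the closed form $-j(d-2)-i-1$ requires a careful tracking of degrees through the Eagon--Northcott formula.
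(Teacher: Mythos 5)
Your proposal is correct and follows essentially the same route as the paper: identify the resolution with the Eagon--Northcott complex of $\alpha\colon R^{n+1}\to R(1)\oplus R(d-1)$, use $\dim V(I)=0$ to see that the determinantal ideal attains the expected codimension $n=f-g+1$ so that the complex is exact and minimal, and then track the graded twists (your substitution $j=i-k$ reproduces the shift $-j(d-2)-i-1$ exactly as in the paper's computation after tensoring with $R(-d)$). The only cosmetic difference is that you deduce saturation from $\mathrm{depth}(R/I)\ge 1$ via Auslander--Buchsbaum, whereas the paper invokes the Hochster--Eagon theorem to get Cohen--Macaulayness of $R/I$; both are valid.
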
		
\begin{proof}
By hypothesis we have
$\codim V(I) = n = n+1-2+1$, so $V(I)$ is a determinantal subscheme of the expected codimension given by the Porteous' Formula - see \cite[Corollary 11]{Kempf}. 
By the Hochster-Eagon Theorem - see \cite{HE} or \cite[Theorem 18.18]{eisenvudcommutative} - $R/I$ is a Cohen-Macaulay ring, so $I$ is saturated. By \cite[A2.10.C]{Eisenbud}, the Eagon-Nothcott complex associated with $\alpha$ is exact and it gives a minimal free resolution of $I$.

Specifically, in our setting we have
    \begin{align*}
	(\Sym_{i-1}(R&(1)\oplus R(d-1)))^*=\bigoplus_{j=0}^{i-1}(\Sym_{i-1-j}(R(1)))^*\otimes(\Sym_{j}(R(d-1)))^*\\
	&=\bigoplus_{j=0}^{i-1}(\Sym_{i-1-j}(R)(i-1-j))^*(-1)\otimes(\Sym_{j}(R)(j(d-1)))^*\\
	&=\bigoplus_{j=0}^{i-1}(\Sym_{i-1-j}(R))^*(-i+1+j)\otimes(\Sym_{j}(R))^*(-j(d-1))\\
	&\cong \bigoplus_{j=0}^{i-1} R(-j(d-2)-i+1).
\end{align*}
The image of the map
\[
\bigwedge^{2} F \cong R^{\binom{n+1}{2}}\to \bigwedge^2 G \cong R(d)
\]
is the ideal $I$ of $2\times 2$ minors of a matrix representing $\alpha$ as in \Cref{def:EN}. In our application, this is the defining ideal of the eigenscheme of a partially symmetric tensor in $\Sym^{d-1}\mathbb{C}^{n+1}\otimes \mathbb{C}^{n+1}$. As we consider this ideal in a standard graded polynomial ring $R$, we shift all the modules in $EN(\alpha)$ by a factor of $-d$, i.e., we consider the complex $EN(\alpha)_\bullet\otimes R(-d)$. Its $i$-th piece is equal to
\begin{align*}
	EN(\alpha)_i\otimes R(-d)&\cong \bigoplus_{j=0}^{i-1} R(-j(d-2)-i+1-d)^{\binom{n+1}{i+1}}\\
	&=\bigoplus_{j=0}^{i-1} R(-(j+1)(d-2)-i-1)^{\binom{n+1}{i+1}}.\qedhere
\end{align*}
\end{proof}

\begin{rmk}
Notice that the defining ideal of an eigenscheme of a tensor, even of a symmetric tensor, may fail to be saturated, as shown in \cite[Remark 3.2]{BGV}. In this case the eigenscheme has dimension at least 1.
\end{rmk}
 
 \begin{rmk}\label{rmk: degrees of a resolution}
 The statement of Theorem \ref{thm: resolution of eigenscheme ideals} holds for any $0$-dimensional determinantal scheme given by the $2 \times 2$ minors of a $2 \times (n+1)$ matrix whose first row consists of linear entries and the second row of degree $d-1$ homogeneous forms.
 \end{rmk}
{As a direct consequence we can see in the following Corollary that any matrix whose maximal minors define a $0$-dimensional eigenscheme involves $n+1$ linearly independent linear entries in the first row. Indeed,
by \cite[Lemma 2.9]{Kreuzer}, any such a matrix is of size $2 \times (n+1)$, with homogeneous rows of degrees $1$ and $d-1$. The next result is a generalization of the analogous result for $n=2$ given in \cite[Proposition 5.2]{ASS}. 
	
	\begin{cor}
Let $d\ge 2$. Let $l_0,\dots,l_n\in \mC[x_0,\dots,x_n]_1$ and $h_0,\dots,h_n\in \mC[x_0,\dots,x_n]_{d-1}$. Let 
	$$
	N =\begin{pmatrix}
			l_0 & l_1 & \dots & l_n\\
			h_0 & h_1 & \dots & h_n
		\end{pmatrix}
	$$
and let $W\subseteq\p^n$ be the scheme defined by the $2\times 2$ minors of $N$. If $\dim W=0$, then $W$ is the eigenscheme of a tensor $T\in(\C^{n+1})^{\otimes d}$ if and only if the linear forms $l_0, l_1,\dots,l_n$ are linearly independent.
	\end{cor}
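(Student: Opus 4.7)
The plan is to prove each implication separately, leveraging the explicit minimal free resolution of $I_W$ provided by \Cref{thm: resolution of eigenscheme ideals} and \Cref{rmk: degrees of a resolution}.

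For the ``if'' direction, the argument would proceed as follows. Assuming $l_0,\dots,l_n$ are linearly independent, there exists $A\in\GL_{n+1}(\C)$ with $(l_0,\dots,l_n)=(x_0,\dots,x_n)A$. Setting $(g_0,\dots,g_n):=(h_0,\dots,h_n)A^{-1}$, the matrix $NA^{-1}$ takes the form \eqref{main_matrix}; since right multiplication by an element of $\GL_{n+1}(\C)$ preserves the ideal of $2\times 2$ minors, $W$ would equal the eigenscheme of the partially symmetric tensor $T=(g_0,\dots,g_n)$.

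For the ``only if'' direction, assuming $n\geq 2$ and $W=E(T)$, the idea is to compare the minimal resolutions of $I_W$ arising from the two presentations. By the discussion following \Cref{rmk: dipende dalla scelta di una base}, one may take $T=(g_0,\dots,g_n)$ to be partially symmetric and write $M$ for the matrix in \eqref{main_matrix}, so $I_W$ equals the ideal of $2\times 2$ minors of both $N$ and $M$. By \Cref{thm: resolution of eigenscheme ideals} the second module $F_2$ of the minimal free resolution of $I_W$ has exactly $\binom{n+1}{3}$ minimal generators in degree $d+1$; via the Eagon--Northcott complex, these correspond to the Koszul-type linear syzygies
\[
l_i(l_jh_k-l_kh_j)-l_j(l_ih_k-l_kh_i)+l_k(l_ih_j-l_jh_i)=0\quad\text{for } i<j<k
\]
in the $N$-presentation, and to the analogous relations with $x_i,x_j,x_k$ replacing $l_i,l_j,l_k$ in the $M$-presentation.

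The key step will be to consider the $\C$-subspace $L(I_W)\subseteq R_1$ spanned by all coefficients appearing in any minimal linear syzygy of $I_W$, expressed in any minimal generating set. A change of minimal generators is encoded by the action of a matrix in $\GL_{\binom{n+1}{2}}(\C)$ on $F_1$, which transforms syzygy coefficients by $\C$-linear combinations and hence preserves $L(I_W)$. Computed from the $N$-presentation, this subspace equals $\la l_0,\dots,l_n\ra$ (for $n\geq 2$, the Koszul coefficients collectively involve every $l_k$); computed from the $M$-presentation it equals $\la x_0,\dots,x_n\ra = R_1$. Equating yields $\la l_0,\dots,l_n\ra=R_1$, i.e.\ $l_0,\dots,l_n$ are linearly independent. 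The main technical point to verify is that, in each presentation, the $\binom{n+1}{3}$ Koszul syzygies are $\C$-linearly independent and hence span the degree-$(d+1)$ piece of $F_2$; this follows from the minimality of the Eagon--Northcott resolution under $\dim W=0$, which forces the differential $F_2 \to F_1$ to be injective in degree $d+1$.
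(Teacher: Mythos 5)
Your proposal is correct and follows essentially the same route as the paper: the ``if'' direction is the identical column-operation argument, and the ``only if'' direction rests on the same key facts, namely that the Eagon--Northcott complex gives the minimal free resolution of $I_W$ for both presentations, that its degree-$(d+1)$ syzygies are the Koszul relations with coefficients $l_i$ (resp.\ $x_i$), and that uniqueness of the minimal resolution forces these two sets of linear coefficients to have the same span. The paper phrases this via an explicit isomorphism of complexes $d_2^M = A\cdot d_2^N\cdot G$ with $G$ block-triangular, which is exactly the well-definedness of your invariant subspace $L(I_W)$ (note that both formulations of this step implicitly use $d\ge 3$, since for $d=2$ the two graded pieces of $F_2$ coincide and the block-triangularity degenerates).
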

		} 
\begin{proof}
If $l_0, l_1,\dots,l_n$ are linearly independent, then there exists $B \in \GL (n+1,\mC)$ such that 
	    $$
	    N'=N \cdot B=\begin{pmatrix}
			x_0 & x_1 & \dots & x_n\\
			b_0 & b_1 & \dots & b_n
		\end{pmatrix},
		$$
		where each $b_j \in \mC[x_0,\dots,x_n]_{d-1}$ is a linear combination of the polynomials $h_k$. The zero set of the $2\times 2$ minors of $N'$ is $W$, and by definition it is the eigenscheme of the partially symmetric tensor $(b_0,
		b_1,\dots, b_n)$.
Conversely, assume that there exists a partially symmetric tensor $T=(g_0,g_1,\dots,g_n)$ such that $E(T)=W$. As the $2\times 2$ minors of $N$ define a $0$-dimensional eigenscheme, the Eagon-Northcott
	complex associated with $N$ is exact and. By Remark \ref{rmk: degrees of a resolution}, its first terms are given by
	$$
	 R(-d-1)^{\binom{n+1}{3}} \oplus R(-2d+1)^{\binom{n+1}{3}} 
	 \xrightarrow{d_2^N} R(-d)^{\binom{n+1}{2}} \to R 
	$$
	
By the definition of the first syzygy map $d_2^N: F_2^N \to F_1^N$,
	it is possible to see that $d_2^N$ is defined by a matrix which is linear in the entries of $N$. More precisely, by setting 
	$$
	m_{ij}=l_i h_j-l_jh_i,
	$$
	the generating syzygies are given by
	$$
	l_i m_{jk}-l_j m_{ik}+l_k m_{ij}=0\mbox{ and } h_i m_{jk}-h_j m_{ik}+h_k m_{ij}=0
	$$
for every $0\leq i<j<k\leq n$. In particular, the first $\binom{n+1}{3}$ columns of $d_2^N$ contain only the forms $l_i$ and $0$. Since $W$ is an eigenscheme by assumption, its ideal is generated also by the $2\times 2$ minors of a matrix $M$ as in \eqref{main_matrix}, so by Theorem \ref{thm: resolution of eigenscheme ideals} the Eagon-Northcott complex associated with $M$ gives also a minimal free resolution of $I_W$.
By \cite[Theorem 20.2]{eisenvudcommutative}, the minimal free resolution of a projective module is unique up to isomorphism of complexes. This means that there is a matrix $ H \in M(\binom{n+1}{3},R_{d-2})$ with degree $d-2$ forms as entries, and there are three invertible matrices $A\in \GL(\binom{n+1}{2},\mC)$ and $B,C\in \GL(\binom{n+1}{3},\mC)$ such that if we call
	$$
	G=\begin{pmatrix}
	B
  & \rvline & H \\
\hline
  0 & \rvline &
  C
	\end{pmatrix},
	$$
then we have $d_2^M = A \cdot d_2^N \cdot G$. In particular, the linear entries $x_0, \dots, x_n$ of $d_2^M$	are linear combinations of 
$l_0, \dots, l_n$. So these forms span the whole of $\mC[x_0, \dots, x_n]_1$, thus they are linearly independent.
		\end{proof}

\section{Characterization of determinantal equations}\label{sec: algorithms}
In this section we give a characterization of tuples of homogeneous polynomials which are determinantal generators of the homogeneous ideal of an eigenscheme, as introduced in Definition \ref{def:eigenscheme}. 

When $n=2$, such a characterization has been found in \cite[Section 3]{BGV} for both general and symmetric tensors. The main ingredients in our proof are two prominent chain complexes in commutative algebra, namely the Koszul complex and the de Rham complex. We begin with the general case.

\begin{thm}\label{prop: equations partially symmetric}
Let $R=\C[x_0,\dots,x_n]$. A tuple $(f_{ij}~:~ 0\leq i<j\leq n)\in R_d^{\binom{n+1}{2}}$ of homogeneous polynomials is the tuple of determinantal equations of the eigenscheme of a tensor if and only if
\begin{equation}\label{eq: equation partial}
    x_if_{jk} - x_j f_{ik} + x_k f_{ij} = 0	 
\end{equation}
for every $0\leq i<j<k\leq n$.
\end{thm}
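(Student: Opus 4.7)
The plan is to package the $\binom{n+1}{2}$ polynomials $f_{ij}$ into a single element of $\bigwedge^2 R^{n+1}$ and to read both the hypothesis and the desired conclusion off the Koszul complex on the regular sequence $x_0,\dots,x_n$. I would dispose of the easy direction first: if $f_{ij}=x_ig_j-x_jg_i$ for some $(g_0,\dots,g_n)\in R_{d-1}^{n+1}$, then substituting into the left-hand side of \eqref{eq: equation partial} one sees that the six resulting terms cancel in pairs. Conceptually this is nothing but the identity $\delta^2=0$ for the Koszul differential.

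For the converse, I would work with the Koszul complex in its ``upward'' form,
\[
0\longrightarrow R\xrightarrow{\delta_0}R^{n+1}\xrightarrow{\delta_1}\bigwedge\nolimits^{2}R^{n+1}\xrightarrow{\delta_2}\bigwedge\nolimits^{3}R^{n+1}\longrightarrow\cdots,
\]
whose differential is wedge product with $\xi:=\sum_{i=0}^{n}x_ie_i$. Encoding the given data as $\omega:=\sum_{i<j}f_{ij}\,e_i\wedge e_j$, a short expansion of $\xi\wedge\omega$ will show that the coefficient of $e_i\wedge e_j\wedge e_k$ (for $i<j<k$) in $\delta_2(\omega)$ is exactly $x_if_{jk}-x_jf_{ik}+x_kf_{ij}$. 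Thus the system \eqref{eq: equation partial} is equivalent to the single statement $\omega\in\ker\delta_2$.

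The last step is to invoke exactness. Since $x_0,\dots,x_n$ form a regular sequence in $R$, the upward Koszul complex has vanishing cohomology at every intermediate slot, with the only exceptional contributions concentrated in degree $0$ at the endpoints. In particular $\ker\delta_2=\Ima\delta_1$ in any degree $d\geq 1$, so there exists $g=\sum_j g_je_j$ with $g_j\in R_{d-1}$ satisfying $\omega=\delta_1(g)$, and reading off the coefficient of $e_i\wedge e_j$ yields $f_{ij}=x_ig_j-x_jg_i$. This identifies $(f_{ij})$ with the tuple of $2\times 2$ minors of the matrix \eqref{main_matrix} associated with the tensor $T=(g_0,\dots,g_n)$. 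The only real obstacle I anticipate is being careful about the grading and the precise statement of exactness at the correct slot in the correct degree; everything else is a routine translation between the Koszul differential and the defining equations of the eigenscheme.
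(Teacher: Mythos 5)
Your proposal is correct and follows essentially the same route as the paper: both arguments identify the system \eqref{eq: equation partial} with membership in the kernel of the Koszul differential $\bigwedge^2 R^{n+1}\to\bigwedge^3 R^{n+1}$ on the regular sequence $x_0,\dots,x_n$, and then use exactness at that slot to produce the tuple $(g_0,\dots,g_n)$ with $f_{ij}=x_ig_j-x_jg_i$. The only difference is cosmetic (you write the differential as wedging with $\xi=\sum x_ie_i$ and verify the easy direction via $\delta^2=0$, which the paper leaves implicit).
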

\begin{proof} By \cite[Section 17.2]{eisenvudcommutative}, the Koszul complex of $x_0,\dots,x_n$ starts with
\[
0\to R\xrightarrow{\alpha} 
 R^{\oplus n+1}\xrightarrow{\beta} \bigwedge^2 R^{\oplus n+1}\xrightarrow{\gamma} \bigwedge^3 R^{\oplus n+1} ,
\]
where
\begin{align*}
\alpha&(h)=x_0h+\ldots+x_nh,\\
\beta&(g_0,\ldots,g_n)=(x_ig_j-x_jg_i~:~ 0\leq i<j\leq n)\\
\gamma&((f_{ij})_{i<j})=(x_if_{jk} - x_j f_{ik} + x_k f_{ij}~:~ 0\leq i<j<k\leq n).
\end{align*}
The tuple $(f_{ij}~:~ 0\leq i<j\leq n)$ is the tuple of determinantal equations of the eigenscheme of a tensor if and only if $(f_{ij})_{i<j}\in\Ima(\beta)$. Since $x_0,\dots,x_n$ is regular sequence, the sequence is exact. In particular, $\Ima(\beta)=\ker(\gamma)$.
\end{proof}

\begin{rmk}\label{rmk: when two eigenschemes have same determinantal equations}
    We observe that exactness of the Koszul complex in the second term implies that
    two partially symmetric tensors $T=(g_0, \dots, g_n)$ and $T'=(g_0', \dots, g_n')$  
    determine the same determinantal equations for their eigenschemes if and only if there exist $c \in \mC^*$ and $h \in R_{d-2}$ such that 
$$
g'_k = c g_k +x_k h
$$
for every $k\in\{0, \dots, n\}$. Indeed, 
since $\Ima(\alpha) =\ker(\beta)$,
we have $T'-T=(x_0h,\dots,x_nh)$, for some $h\in R_{d-2}$.
    
\end{rmk}

As a consequence of the previous remark, we observe that the eigenschemes are an example of general determinantal schemes, which admit a positive dimensional family of defining matrices and they are not obtained by left and right multiplication by a scalar invertible matrix. This is not the case for general determinantal schemes of dimension $\ge 1$, which are defined by the maximal minors of a matrix, as shown in \cite[Main Theorem]{FF}.

Now we are interested in a more refined version of Proposition \ref{prop: equations partially symmetric}. Namely, we would like a similar characterization for determinantal equations of symmetric tensors. In particular, we will prove \cite[Conjecture 3.13]{BGV}. Our argument relies on the exactness of the affine de Rham complex, which we briefly recall.
\begin{defn}
\label{def: de Rham complex}
Let $R = \mathbb{C}[x_0,\dots,x_n]$. The \emph{de Rham complex} of $\mathbb{A}^{n+1}$ is the complex of $R$-modules defined as
	\[
	0\to \Omega^0_{\mathbb{A}^{n+1}}\to\Omega^1_{\mathbb{A}^{n+1}}\to\cdots\to\Omega^{n+1}_{\mathbb{A}^{n+1}}\to 0,
	\]
	where $\Omega^k_{\mathbb{A}^{n+1}}$ is the module of algebraic differential $k$-forms over $\mathbb{A}^{n+1}$. By letting $dx_0,\dots,dx_n$ be an $R$-basis for $\Omega^1_{\mathbb{A}^{n+1}}$ we can write a $k$-form $\omega = \sum_{I\in\binom{[n+1]}{k}}f_I dx_I\in \Omega^k_{\mathbb{A}^{n+1}}$, with $f_I\in R$ and with $dx_I=dx_{i_1}\wedge\cdots\wedge dx_{i_k}$ for $I = \{i_1,\dots,i_k\}$. The differentials of the de Rham complex are given by exterior derivatives, i.e.
	\[
		d\omega = \sum_{I\in\binom{[n+1]}{k}}\sum_{j=0}^{n}\de_j f_I dx_j\wedge dx_I.
	\]\end{defn}
We are interested in the differential 
	\begin{align*}
d:\Omega^1_{\mathbb{A}^{n+1}}&\to\Omega^2_{\mathbb{A}^{n+1}}\\
		\sum_{i=0}^n f_i dx_i&\mapsto \sum_{i=0}^n\sum_{j=0}^n \de_j f_idx_j\wedge dx_i = \sum_{0\leq i<j\leq n} (\de_if_j-\de_jf_i)dx_i\wedge dx_j.
	\end{align*}
	As the de Rham complex of $\mathbb{A}^{n+1}$ is exact everywhere but in degree 0 (this follows, from instance, from de Rham's theorem relating the cohomology of de Rham complex with the singular cohomology of $\mathbb{A}^{n+1}$), we conclude that whenever a tuple of $n+1$ polynomials $(f_0,\dots,f_n)$ satisfy the relations $\de_if_j-\de_jf_i$ for every $0\leq i<j\leq n$, then there exists an element of $\Omega^0_{\mathbb{A}^{n+1}}\cong R$ such that $\nabla g = (f_0,\dots,f_n)$. If moreover the polynomials $f_0,\dots,f_n$ are homogeneous of degree $d-1$, then by Euler formula $g$ is homogeneous of degree $d$.

We are now in position to characterize determinantal generators of symmetric tensors. The next result proves \cite[Conjecture 3.13]{BGV}.
	\begin{thm}\label{thm: equations symmetric} Let $d\ge 2$,  $n\ge 2$ and let $R=\C[x_0,\dots,x_n]$. A tuple $(f_{ij}~:~ 0\leq i<j\leq n)\in R_d^{\binom{n+1}{2}}$ is the tuple of determinantal equations of the eigenscheme of a polynomial in  $R_d$ if and only if 
		\begin{equation}\label{eq: x_if_jk-...}
			x_if_{jk}-x_jf_{ik}+x_kf_{ij} = 0
		\end{equation}
		and 
		\begin{equation}\label{eq: de_if_jk-...}
		\de_if_{jk}-\de_jf_{ik}+\de_kf_{ij} = 0
		\end{equation}
		for every $0\leq i<j<k\leq n$.
	\end{thm}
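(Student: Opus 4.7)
The forward implication is a direct calculation. With $g_i=\de_if$, relation \eqref{eq: x_if_jk-...} is exactly \Cref{prop: equations partially symmetric} applied to the partially symmetric tensor $(\de_0f,\dots,\de_nf)$. For \eqref{eq: de_if_jk-...}, since the indices $i<j<k$ are pairwise distinct, we have $\de_kf_{ij}=x_i\de_k\de_jf-x_j\de_k\de_if$ (no Kronecker delta survives), and substituting into $\de_if_{jk}-\de_jf_{ik}+\de_kf_{ij}$ makes every term cancel in pairs by the commutativity of partial derivatives.

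For the converse I would argue by induction on $d\ge 0$, viewing the statement as a claim about arbitrary $d$. The base cases are immediate: for $d=0$, condition \eqref{eq: x_if_jk-...} together with $n\ge 2$ and the linear independence of $x_0,\dots,x_n$ forces $f_{ij}=0$, and $f=0$ works; for $d=1$, \Cref{prop: equations partially symmetric} produces constants $c_0,\dots,c_n$ with $f_{ij}=x_ic_j-x_jc_i$, and $f=\sum_ic_ix_i$ does the job.

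For the inductive step $d\ge 2$, I would first invoke \Cref{prop: equations partially symmetric} to obtain $g_0,\dots,g_n\in R_{d-1}$ with $f_{ij}=x_ig_j-x_jg_i$, and set $p_{ij}=\de_ig_j-\de_jg_i\in R_{d-2}$. A direct expansion (using that $i<j<k$ are pairwise distinct) gives
\[
\de_if_{jk}-\de_jf_{ik}+\de_kf_{ij}=-(x_ip_{jk}-x_jp_{ik}+x_kp_{ij}),
\]
so hypothesis \eqref{eq: de_if_jk-...} on $(f_{ij})$ is precisely the Koszul relation \eqref{eq: x_if_jk-...} for $(p_{ij})$. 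Relation \eqref{eq: de_if_jk-...} for $(p_{ij})$ comes for free from the equality of mixed partials. The induction hypothesis in degree $d-2$ then yields $h\in R_{d-2}$ with $p_{ij}=x_i\de_jh-x_j\de_ih$. Replacing $g_i$ by $g'_i=g_i+x_ih$ (allowed by \Cref{rmk: when two eigenschemes have same determinantal equations}) keeps the identity $f_{ij}=x_ig'_j-x_jg'_i$ intact while producing $\de_ig'_j-\de_jg'_i=p_{ij}-(x_i\de_jh-x_j\de_ih)=0$ for every $i\ne j$. The $1$-form $\sum_ig'_i\,dx_i\in\Omega^1_{\mA^{n+1}}$ is therefore closed, and the exactness of the affine de Rham complex in degree $1$, combined with Euler's identity to control homogeneity, produces $f\in R_d$ with $\de_if=g'_i$; consequently $f_{ij}=x_i\de_jf-x_j\de_if$.

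The crux of the argument is the degree-descending observation that hypothesis \eqref{eq: de_if_jk-...} on $(f_{ij})$ transports to the Koszul relation \eqref{eq: x_if_jk-...} for the lower-degree tuple $(p_{ij})=(\de_ig_j-\de_jg_i)$. Once this translation is in place the rest is bookkeeping: \Cref{prop: equations partially symmetric} lets us back out the $g_i$'s, \Cref{rmk: when two eigenschemes have same determinantal equations} controls the ambiguity, and Poincaré's lemma for closed $1$-forms on $\mA^{n+1}$ closes the argument.
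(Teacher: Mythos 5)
Your proof is correct and follows essentially the same route as the paper: Koszul exactness via \Cref{prop: equations partially symmetric} to produce the $g_i$, the key identity converting hypothesis \eqref{eq: de_if_jk-...} into the Koszul relation for the degree-$(d-2)$ tuple $(\de_ig_j-\de_jg_i)$, induction descending by two in the degree, and exactness of the de Rham complex together with Euler's identity to conclude. The only (cosmetic, in fact slightly cleaner) difference is that you spell out the base cases $d=0$ and $d=1$, which the paper's descent implicitly needs for odd $d$ but only states for $d=2$.
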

Note that the equations labelled with \eqref{eq: x_if_jk-...} are precisely the equations in \eqref{eq: equation partial}.	
\begin{proof}
It is immediate to check that the determinantal equations of the autoscheme of a polynomial satisfy \eqref{eq: x_if_jk-...} and \eqref{eq: de_if_jk-...}. Now we assume that the two equations hold and we have to prove the existence of a polynomial $f\in R_d$ such that $x_i\de_j f-x_j\de_i f=f_{ij}$. Assume that \eqref{eq: x_if_jk-...} holds for every $(i,j,k)$. By \Cref{prop: equations partially symmetric} there exist polynomials $g_0,\dots,g_n\in R_{d-1}$ such that $f_{ij}=x_ig_j-x_jg_i$ for every $0\leq i<j\leq n$.	If \eqref{eq: de_if_jk-...} holds for a fixed triple $(i,j,k)$, then
		\begin{align}
0&=\de_if_{jk}-\de_jf_{ik}+\de_kf_{ij}\nonumber\\
&=\de_i(x_jg_k-x_kg_j)-\de_j(x_ig_k-x_kg_i)+\de_k(x_ig_j-x_jg_i)\nonumber\\
&=x_i(-\de_jg_k+\de_kg_j)-x_j(-\de_ig_k+\de_kg_i)+x_k(-\de_ig_j+\de_jg_i).\label{eq: deigj-dejgi equation}
		\end{align}
		If $d=2$, then $\deg(-\de_jg_k+\de_kg_j)=\deg(-\de_ig_k+\de_kg_i)=\deg(-\de_ig_j+\de_jg_i)=0$. The last equation above implies then that $-\de_jg_k+\de_kg_j=-\de_ig_k+\de_kg_i=-\de_ig_j+\de_jg_i=0$. By repeating the same step with other triples $(i,j,k)$ we obtain that $\de_jg_i-\de_ig_j=0$ for every $0\leq i<j\leq n$. By what we discussed above, this implies that there exists $f\in R_d$, such that $\nabla f = (g_0,\dots,g_n)$. Substituting the identities $\de_i f = g_i $ in $f_{ij} = x_ig_j-x_jg_i$ we obtain that $f_{ij} = x_i\de_jf-x_j\de_if$ which proves the claim.\\
		Assume the claim holds for every degree smaller than $d$. Then \eqref{eq: deigj-dejgi equation} shows that the tuple $(\de_ig_j-\de_jg_i: 0\leq i<j\leq n)\in R_{d-2}$ satisfy \eqref{eq: x_if_jk-...}. By Schwartz theorem, it also satisfies \eqref{eq: de_if_jk-...}. By induction, there exists a polynomial $h\in R_{d-2}$ such that 
		\begin{equation*}
			(\de_ig_j-\de_jg_i)-(\de_i x_jh-\de_jx_ih)=\de_i(g_j-x_jh)-\de_j(g_i-x_ih)=0\\
		\end{equation*}
		for every $0\leq i<j\leq n$. This implies the existence on a polynomial $f\in R_{d}$ such that $g_i-x_ih=\de_if$. Since $\de_ix_jf-\de_jx_if=x_j(g_i-x_ih)-x_i(g_j-x_jh)=x_jg_i-x_ig_j=f_{ij}$ we conclude.
\end{proof}

\Cref{prop: equations partially symmetric} and \Cref{thm: equations symmetric} give an effective algorithm to test whether $\binom{n+1}{2}$ homogeneous forms $h_1,\dots,h_{\binom{n+1}{2}}$ of degree $d$ are the defining equations of an eigenscheme (see \Cref{alg: algorithm1}). Note that if conditions \eqref{eq: x_if_jk-...} or \eqref{eq: de_if_jk-...} are not satisfied by the triples of polynomials $h_i$ in any order, we cannot conclude that these polynomials do not define the eigenscheme of a tensor. Indeed, there might be a change of basis in the linear system of the polynomials $h_i$ for which the new basis satisfies conditions \eqref{eq: x_if_jk-...} or \eqref{eq: de_if_jk-...}. 

\begin{algorithm}[H]
\begin{flushleft}
    \vspace{2pt}
	\textbf{Input:} $h_1,\dots,h_{\binom{n+1}{2}}\in\C[x_0,\dots,x_n]_d$. \\
	\textbf{Step 1:} Set $M=(m_{ij})$ a $\binom{n+1}{2}\times \binom{n+1}{2}$ matrix of indeterminates.\\
	\textbf{Step 2:} Let $(f_{ij} ~: ~ 1\leq i<j\leq n+1)=M\cdot (h_1\c
	dots h_{\binom{n+1}{2}})^\top$. The coefficients of the polynomials $f_{ij}$ are linear combinations of the $m_{ij}$.\\
	\textbf{Step 3:} Consider the system of $\binom{n+1}{3}$ linear equations in the $m_{ij}$ given by \eqref{eq: equation partial}.\\
	\hspace{20pt} \textbf{If} the system has a nontrivial solutions\\
	\hspace{40pt}\textbf{return} False: $h_1,\dots,h_{\binom{n+1}{2}}$ do not generate the eigenscheme of a partially symmetric tensor.\\
	\hspace{20pt}\textbf{Else}\\ \hspace{40pt}\textbf{return} True:  $h_1,\dots,h_{\binom{n+1}{2}}$ generate the eigenscheme of a partially symmetric tensor.\\
	\textbf{Step 4:} If  Step 3 returned True, consider the system of $\binom{n+1}{3}$ linear equations in the $m_{ij}$ given by \eqref{eq: de_if_jk-...}.\\
	\hspace{20pt}\textbf{If} the system has no nontrivial solution\\
	\hspace{40pt}\textbf{return} False: $h_1,\dots,h_{\binom{n+1}{2}}$ do not generate the eigenscheme of a symmetric tensor.\\ \hspace{20pt} \textbf{Else}\\
	\hspace{40pt} \textbf{return} True: $h_1,\dots,h_{\binom{n+1}{2}}$ generate the eigenscheme of a symmetric tensor.
	\caption{Test if homogeneous forms are defining equations of an eigenscheme}
	\label{alg: algorithm1}
\end{flushleft}	
\end{algorithm}
Another computational problem one is naturally concerned with is the following: given $m$ distinct points in $\p^n$, is there an eigenscheme containing them? In \Cref{alg: algorithm2} we propose a straightforward procedure to test this, by simply considering a generic partially symmetric or symmetric tensor and prescribing the vanishing of the minors of the associated matrix \eqref{main_matrix} on the points. This yields a system of $m\binom{n+1}{2}\binom{n+d}{d}$ linear equations in $(n+1)\binom{n+d-1}{d-1}$ (in the partially symmetric case) or $\binom{n+d}{d}$ (in the symmetric case) variables. From a solution of this system, a witness tensor can be easily reconstructed.
\begin{algorithm}[H]
\begin{flushleft}
    \vspace{2pt}
	\textbf{Input:} $p_1,\dots,p_m\in\p^n$, $d\geq 2$.\\
	\textbf{Step 1a:} For $i\in\{0,\dots, n\}$, let $$g_i=\sum_{\alpha}\lambda_{i,\alpha}x^\alpha,$$ where $x^\alpha$ ranges among all degree $d-1$ monomials in the variables $x_0,\ldots,x_n$. 	In words, each $g_i$ is a polynomial of degree $d-1$ with distinct unknowns $\lambda_{i,\alpha}$ as coefficients.\\
	\textbf{Step 2a:} Consider the system of $m\binom{n+1}{2}\binom{n+d}{d}$ linear equations in the variables $\lambda_{i,\alpha}$ given by $x_ig_j(p_k)-x_jg_i(p_k)=0$ for every $0\leq i<j\leq n$ and for every $k\in\{1,\dots,m\}$.\\
	\hspace{20pt} \textbf{If} the system has a non trivial solution\\
	\hspace{40pt} \textbf{return } True: $p_1,\dots,p_m$ are contained in the eigenscheme of a partially symmetric tensor in $\text{Sym}^{d-1}(\C^{n+1})\otimes\C^{n+1}$. A witness tensor is given by $(g_0,\dots,g_n)$, after evaluating the coefficients of each $g_i$ in a solution of the system.\\
	\textbf{Step 1b:} Let $$f=\sum_{\alpha}\lambda_{i,\alpha}x^\alpha,$$ where $x^\alpha$ ranges among all degree $d$ monomials in the variables $x_0,\ldots,x_n$.\\
	\textbf{Step 2b:} Consider the system of $m\binom{n+1}{2}\binom{n+d}{d}$ linear equations in the variables $\lambda_{i,\alpha}$ given by $x_i\de_j f(p_k)-x_j\de_i f(p_k)=0$ for every $0\leq i<j\leq n$ and for every $k\in\{1,\dots,m\}$.\\
	\hspace{20pt} \textbf{If} the system has a non trivial solution\\
	\hspace{40pt} \textbf{return } True: $p_1,\dots,p_m$ are contained in the eigenscheme of a symmetric tensor in $\text{Sym}^{d}(\C^{n+1})$. A witness tensor is given by $f$, after evaluating the coefficients in a solution of the system. 
	
	\caption{Test if points are contained in an eigenscheme of a tensor of fixed degree}
	\label{alg: algorithm2}
\end{flushleft}	
\end{algorithm}

\section{The variety of eigenpoints}\label{section: eigenvariety}
Now we take a more geometric approach and we turn our attention to the variety parametrizing configurations of eigenpoints of tensors. Let $(\p^n)^{w(n,d)}$ be the symmetric power of $\p^n$ with itself $w(n,d)$ times, where $w(n,d)$ is the integer in \Cref{def: phi}. In other words, $(\p^n)^{w(n,d)}$ parametrizes sets of $w(n,d)$ points in $\p^n$. Since $w(n,d)$ is precisely the degree of the eigenscheme of a generic tensor, following \cite[Section 5]{ASS} we define the {\it variety of eigenconfigurations} $
\Eig_{n,d} \subseteq
(\p^n)^{w(n,d)}
$ to be the Zariski closure of the set of all eigenconfigurations of  
tensors in 
$(\C^{n+1})^{\otimes d}$.

The dimension of $\Eig_{2,d}$ has been determined in \cite[Theorem 5.5]{ASS}. Thanks to Remark \ref{rmk: when two eigenschemes have same determinantal equations}, we can give an upper bound for such a dimension in general.

\begin{prop}
\label{prop: dimension of the eigenvariety}
For any $n \ge 2$ and $d \ge 3$ we have
$$
\dim \Eig_{n,d} \le \dfrac{n(n+d)}{d-1}\binom{n+d-2}{n} -1 .
$$
\end{prop}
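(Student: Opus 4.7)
The plan is to bound $\dim\Eig_{n,d}$ via the fiber dimension theorem, applied to a dominant rational map from the space of partially symmetric tensors to $\Eig_{n,d}$, with a positive-dimensional family inside each fiber supplied by \Cref{rmk: when two eigenschemes have same determinantal equations}.

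First I would set $\mathcal{T}:=(\Sym^{d-1}\C^{n+1})^{\oplus(n+1)}$, which is irreducible of dimension $(n+1)\binom{n+d-1}{d-1}$, and define a rational map
$$
\phi:\mathcal{T}\dashrightarrow\Eig_{n,d}
$$
sending a partially symmetric tensor $T$ to its eigenconfiguration. This is well-defined on the dense open locus where the eigenscheme is $0$-dimensional of degree $w(n,d)$. Since every tensor in $(\C^{n+1})^{\otimes d}$ has the same eigenscheme as some partially symmetric tensor (as discussed right after \Cref{def:eigenscheme}), the image of $\phi$ is dense in $\Eig_{n,d}$, so $\phi$ is dominant.

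Next I would use \Cref{rmk: when two eigenschemes have same determinantal equations} to exhibit a positive-dimensional family inside the generic fiber of $\phi$. For every $c\in\C^*$ and every $h\in R_{d-2}$, the tensor $T_{c,h}:=cT+(x_0h,\ldots,x_n h)$ has the same tuple of determinantal equations as $T$, hence the same eigenconfiguration. For generic $T=(g_0,\ldots,g_n)$ the assignment $(c,h)\mapsto T_{c,h}$ is injective: an equality $cT+(x_ih)_i=c'T+(x_ih')_i$ would give $(c-c')g_i=x_i(h'-h)$ for every $i$, forcing each $g_i$ to be divisible by $x_i$ unless $c=c'$, and then $h=h'$. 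Thus the generic fiber of $\phi$ contains an irreducible subvariety of dimension $1+\binom{n+d-2}{n}$.

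By the fiber dimension theorem,
$$
\dim\Eig_{n,d}\le\dim\mathcal{T}-1-\binom{n+d-2}{n}=(n+1)\binom{n+d-1}{d-1}-1-\binom{n+d-2}{n},
$$
and it remains to rewrite this in the form stated. Factoring $\binom{n+d-2}{n}$ out of the first two summands and using the identity $(n+1)(n+d-1)-(d-1)=n(n+d)$ rewrites the right-hand side as $\frac{n(n+d)}{d-1}\binom{n+d-2}{n}-1$. I do not foresee any serious obstacle: only the easy direction of \Cref{rmk: when two eigenschemes have same determinantal equations} is needed (a one-inclusion on fibers suffices for an upper bound on the image), and everything else is a routine dimension count.
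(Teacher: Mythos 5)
Your proof is correct and follows essentially the same route as the paper: a dominant rational map from the space of partially symmetric tensors to $\Eig_{n,d}$, a lower bound on the generic fiber dimension coming from the family $cT+(x_0h,\dots,x_nh)$ of \Cref{rmk: when two eigenschemes have same determinantal equations}, and the fiber dimension theorem. The only difference is that you work with the affine space of tuples $(g_0,\dots,g_n)$ while the paper projectivizes (your count $\dim\mathcal{T}-(1+\binom{n+d-2}{n})$ matches their count exactly), and your explicit injectivity check for $(c,h)\mapsto T_{c,h}$ is a welcome extra detail.
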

\begin{proof}
Consider the dominant rational map
$$
\eta _{n,d} : \p(\mC[x_0, \dots, x_n]_{d-1})
\times \dots \times \p(\mC[x_0, \dots, x_n]_{d-1})
\dasharrow \Eig_{n,d},
$$
with $\eta_{n,d} ([g_0], \dots, [g_n]) = E(T)$ and $T=(g_0, \dots, g_n)$. By Remark \ref{rmk: when two eigenschemes have same determinantal equations}, the general fiber of
$\eta_{n,d}$ contains a copy of $\p(\mC[x_0, \dots, x_n]_{d-2})$. Since the (affine) dimension of $\Eig_{n,d}$ is equal to the dimension of the source of the map $\eta_{n,d}$ minus that of the generic fiber, we have projectively that
\[
    \dim \Eig_{n,d} \le (n+1)\binom{n+d-1}{n}-\binom{n+d-2}{n}-1,
\]
and the claim follows.
\end{proof}

By \cite[Theorem 5.5]{ASS} we have $\dim \Eig_{2,d} = d^2+2d-1$, so the bound given by \Cref{prop: dimension of the eigenvariety} is sharp for $n=2$. We expect equality to hold for every $n\ge 2$, but at the moment we lack a characterization of all possible defining matrices for a given $0$-dimensional eigenscheme.

Now we turn to the symmetric case. The dimension of the corresponding variety has already been computed in
\cite[Theorem 1.1]{Turatti}. Here we prove that $\Eig_{n,d,\Sym}$ is rational and give a birational parametrization. Our argument is similar to the one presented in \cite[Remark 3.7]{BGV}.
\begin{definition} 
We set
\[
\begin{matrix}
\phi_{n,d}:&\p(\C[x_0,\ldots,x_n]_d)&\dashrightarrow & (\p^n)^{w(n,d)}\\
&f&\mapsto &E(f),
\end{matrix}
\]
and we define $\Eig_{n,d,\Sym}$ to be the closure of the image of $\phi_{n,d}$.

\end{definition}
Using \cite[Theorem 1.2]{CartSturm}, we obtain that $\phi_{n,d}$ is defined on all forms $f$ such that $E(f)$ is reduced and $0$-dimensional.

Let $\alpha$ be the projectivization of the linear map
\[\begin{matrix}
\C[x_0,\ldots,x_n]_d &\to &(\C[x_0,\ldots,x_n]_d)^{\oplus\binom{n+1}{2}}\\
f&\mapsto &(x_i\de_j f-x_j\de_i f)_{i<j},
\end{matrix}\]
and define
\begin{equation*}\label{Td per esteso}
T_{n,d}= \overline {\alpha (\p(\C[x_0,\ldots,x_n]_d))} \subseteq\p((\C[x_0,\ldots,x_n]_d)^{\oplus \binom{n+1}{2}})
\end{equation*}
the closure of the image of $\alpha$. Then $T_{n,d}$ is a linear subspace of $\p((\C[x_0,\ldots,x_n]_d)^{\oplus \binom{n+1}{2}})$ and there is a commutative diagram
\begin{displaymath}
  \xymatrix{
 \p(\C[x_0,\ldots,x_n]_d)\ar@{-->}[rr]^{\phi_{n,d}}\ar[rd]_{\alpha} & & \Eig_{n,d,\Sym}\\
       & T_{n,d}\ar@{-->}[ru]_{\psi_{n,d}} &
}
\end{displaymath}
where $\psi_{n,d}(f_{ij}~:~ 0\leq i<j\leq n)$ is the set of points defined by the ideal $(f_{ij}~:~ i<j)$ in $\p^n$.
\begin{prop}
\label{pro: Eig e' razionale}
Let $n$ and $d$ be positive integers such that $d\ge 3$. The map $\psi_{n,d}:T_{n,d}\dashrightarrow\Eig_{n,d,\Sym}$ is birational. In particular, $\Eig_{n,d,\Sym}$ is an irreducible, rational variety.
\end{prop}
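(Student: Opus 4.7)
The plan is to show $\psi_{n,d}$ is birational by establishing both dominance and generic injectivity; the rationality and irreducibility of $\Eig_{n,d,\Sym}$ will then follow immediately from the fact that $T_{n,d}$ is a linear projective subspace, hence itself rational and irreducible. Dominance is automatic from the commutative diagram in the statement: $\phi_{n,d}=\psi_{n,d}\circ\alpha$ is dominant by the very definition of $\Eig_{n,d,\Sym}$ as the closure of $\Ima\phi_{n,d}$, and therefore so is $\psi_{n,d}$.

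For generic injectivity, I would fix a generic $Z\in\Eig_{n,d,\Sym}$. By \cite[Theorem 1.2]{CartSturm} together with \cite[Section 5.2]{Abo} we may assume that $Z=E(f)$ is a reduced $0$-dimensional scheme of degree $w(n,d)$ with $I_Z$ saturated. Let $(f'_{ij})=\alpha(f')\in T_{n,d}$ be an arbitrary second preimage of $Z$, so that the $2\times 2$ minors of
\[
M = \begin{pmatrix} x_0 & \cdots & x_n \\ \partial_0 f & \cdots & \partial_n f \end{pmatrix},
\qquad
M' = \begin{pmatrix} x_0 & \cdots & x_n \\ \partial_0 f' & \cdots & \partial_n f' \end{pmatrix}
\]
both generate $I_Z$. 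Theorem \ref{thm: resolution of eigenscheme ideals} says that both tuples are minimal systems of generators of $(I_Z)_d$, so running the uniqueness-of-minimal-free-resolution argument used in the proof of the corollary after Theorem \ref{thm: resolution of eigenscheme ideals} and tracking the gradings forced by the two rows of $M$ and $M'$ having sizes $1$ and $d-1$, one obtains $M'=UMV$ for some scalar matrix $V$ (forced by matching the first rows) and some $U = \left(\begin{smallmatrix} c_1 & 0 \\ h & c_2 \end{smallmatrix}\right)$ with $c_1,c_2\in\C^*$ and $h\in R_{d-2}$ (using $d\ge 3$ to kill the top-right entry of $U$, since it would live in $R_{2-d}=0$). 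Comparing second rows yields $\partial_k f'= c\,\partial_k f+x_k\tilde h$ with $c=c_2/c_1$ and $\tilde h=h/c_1$, which is exactly the conclusion of Remark \ref{rmk: when two eigenschemes have same determinantal equations} specialized to the symmetric setting. A direct substitution then gives
\[
f'_{ij}= x_i\partial_j f'-x_j\partial_i f'=c(x_i\partial_j f-x_j\partial_i f)=c\,f_{ij}
\]
for every $0\le i<j\le n$, so $\alpha(f')=c\,\alpha(f)$ and the two tuples determine the same point of $T_{n,d}$.

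Combining these two steps shows that $\psi_{n,d}$ is birational, and the remaining claims follow because $T_{n,d}$ is a linear projective subspace of $\p((\C[x_0,\dots,x_n]_d)^{\oplus\binom{n+1}{2}})$. I expect the main technical obstacle to be the rigidity argument in the middle step: extracting the precise shape of $U$ and $V$ from the uniqueness of the Eagon-Northcott resolution and the prescribed grading of the rows of $M,M'$ is where all the input from Section \ref{section: syzygies} is concentrated, and where the hypothesis $d\ge 3$ is genuinely used.
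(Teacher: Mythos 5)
Your overall strategy (dominance plus generic injectivity, with irreducibility and rationality inherited from the linear space $T_{n,d}$) is sound, and both the dominance step and the final substitution are fine. The problem is the middle step. From the uniqueness of the minimal graded free resolution you only obtain an isomorphism of complexes, i.e.\ invertible graded matrices $A$ and $G$ with $d_2^{M'}=A\cdot d_2^{M}\cdot G$; this is what the corollary after Theorem \ref{thm: resolution of eigenscheme ideals} uses, and all it extracts there is that the linear forms appearing in the two syzygy matrices span the same space. It does not give a factorization $M'=UMV$ of the defining matrices themselves. Passing from ``the two ideals coincide'' to ``the second rows satisfy $g_k'=cg_k+x_kh$'' is precisely the characterization of all defining matrices of a given $0$-dimensional eigenscheme, and the paper explicitly states (right after Proposition \ref{prop: dimension of the eigenvariety}) that such a characterization is not available --- which is why that proposition is only an upper bound. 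Remark \ref{rmk: when two eigenschemes have same determinantal equations} yields $g_k'=cg_k+x_kh$ only under the much stronger hypothesis that the two \emph{tuples} of determinantal equations coincide, not merely the ideals they generate; a priori the second tuple could be an arbitrary $\GL\bigl(\tbinom{n+1}{2},\C\bigr)$-transform of the first compatible with the Koszul relations, and nothing in your argument rules this out.

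The paper sidesteps this issue entirely: it quotes \cite[Theorem 1.1]{Turatti} for the structure of the general fiber of $\phi_{n,d}$ (a point for $d$ odd, a line for $d$ even) and \cite[Lemma 2.8]{BGV} for the fibers of $\alpha$ (an isomorphism for $d$ odd, lines for $d$ even), and then reads off generic injectivity of $\psi_{n,d}$ from the factorization $\phi_{n,d}=\psi_{n,d}\circ\alpha$. The statement your middle step is really after --- that $E(f')=E(f)$ for general $f$ forces $\alpha(f')=c\,\alpha(f)$ --- is essentially equivalent to Turatti's theorem, so an external input of that strength is needed; it cannot be recovered from the Eagon--Northcott resolution alone by ``tracking gradings''. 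You correctly flagged this step as the main obstacle, but as written it is a genuine gap, not a technicality.
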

\begin{proof}
We distinguish two cases. If $d$ is odd, then $\alpha$ is an isomorphism by \cite[Lemma 2.8]{BGV}. Since $\phi_{n,d}$ is birational by \cite[Theorem 1.1]{Turatti}, $\psi_{n,d}$ is birational as well. Assume now that $d$ is even. The fibers of $\alpha$ are lines by \cite[Lemma 2.8]{BGV} and the general fiber of $\phi_{n,d}$ is a line by \cite[Theorem 1.1]{Turatti}. This implies that $\psi_{n,d}$ is generically injective.
\end{proof}

\begin{rmk}
Another description of the eigenvariety was found in \cite[Section 5]{CGKS}. The authors show that $\Eig_{3,3,\Sym}$ is birational to a linear section of the Grassmanian $\G(3,14)$, but their argument can be easily adapted to different values of $n$ and $d$.
\end{rmk}

\section{Generalized Laguerre maps}\label{section: laguerre}
The goal of this section is to give a necessary condition for a $0$-dimensional scheme to be the eigenscheme of a tensor. We accomplish this task in Proposition \ref{pro: no kd points on a curve}. Our argument relies on \cite[Section 5]{BGV} and consists in studying the fibers of the Laguerre map associated with a partially symmetric tensor.

\begin{defn}\label{definition: laguerre map}Let $n\ge 2$ and $d\ge 3$, and
let $T=(g_0,\dots,g_n)\in\Sym^{d-1}\C^{n+1}\otimes\C^{n+1}$ be a partially symmetric tensor. For every $0\le i<j\le n$, let $f_{ij}=x_i g_j -x_j g_i$ be the $2\times 2$ minors of the matrix
\eqref{main_matrix}.
The {\it Laguerre map} associated with $T$ is the map 
$$
\lambda_T: \mP^n \dasharrow \mP(\Lambda ^2 \C^{n+1})
$$
given by $\lambda_T (P) = (f_{ij}(P):0\le i<j\le n)$. It is defined on the open subset $\p^n\setminus E(T)$.
\end{defn}

\begin{rmk}\label{rmk: contracted}
Let us take a closer look at the image of $\lambda_T$. It is easy to see that $\lambda_T (\mP^n)\subsetneq\mP(\Lambda ^2 \C^{n+1})$. Indeed, the coordinates of $\lambda_T(P)$ are the Pl\"ucker coordinates of the line in $\p^n$ joining $P$ and $(g_0(P):\ldots:g_n(P))$. Therefore $\lambda_T (\mP^n) \subseteq \mG(1,n)$.

By definition, the ideal of $E(T)$ is generated by the polynomials $f_{ij}$, hence the closure of the graph of $\lambda_T$ is a model of the blow-up ${\Bl}_{E(T)} \mP^n \subseteq\mP^n \times \mG(1,n)$. To determine its equations, observe that a point of $\mG(1,n)$ with Pl\"ucker coordinates corresponding to the $ 2 \times 2$ minors of a rank 2 matrix
\begin{equation*}\label{eq: ab}
\left(
	\begin{matrix}
	a_0 & a_1 & \dots & a_n \\
	b_0 & b_1 & \dots  & b_n\\
	\end{matrix}\right)
	\end{equation*}
lies in the image of $\lambda_T$ if and only if there exists a point $P=(P_0:P_1: \ldots :P_n) \in \mP^n$ such that 
$$
\rank\left(
	\begin{matrix}
	a_0 & a_1 & \dots & a_n \\
	b_0 & b_1 & \dots & b_n\\
	P_0 & P_1 & \dots & P_n \\
g_0(P) & g_1 (P)& \dots & g_n(P)
	\end{matrix}\right)=2.
	$$
Therefore the equations of ${\Bl}_{E(T)} \mP^n$ in $\mP^n \times \mG(1,n)$ are given by the 
$3 \times 3$ minors. Since the first two rows are linearly independent, by Kronecker's theorem on the rank of a matrix it is enough to consider the $3 \times 3$ minors obtained by deleting the third row 
and the  
$3 \times 3$ minors obtained deleting the fourth row. In particular, there are $\binom{n+1}{3}$ equations of bidegree $(1,1)$ and $\binom{n+1}{3}$ equations of bidegree $(d-1,1)$. 
\end{rmk}

We now want to study the fibers of $\lambda_T$. Let 
$$
p_1 : {\Bl}_{E(T)} \mP^n \to \p^n~\mbox{ and }~p_2 : {\Bl}_{E(T)} \mP^n \to \mG(1,n)
$$
be the projections. We observe that $\lambda_T^{-1}(\omega)=p_1(p_2^{-1}(\omega))$ for every $\omega\in\mG(1,n)$. Hence it suffices to look at fibers of $p_2$.
\begin{prop}\label{pro: fibre} Let $T=(g_0,\dots,g_n)\in\Sym^{d-1}\C^{n+1}\otimes \C^{n+1}$ be a partially symmetric tensor. Then every fiber of $\lambda_T: \mP^n \dasharrow \mG(1,n)$ is contained in a line in $\mP^n$.
\end{prop}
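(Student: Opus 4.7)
The plan is to read off the geometric content of $\lambda_T$ already recorded in the preceding Remark. For any $P \in \mathbb{P}^n \setminus E(T)$, the tuple $(f_{ij}(P))_{0 \le i < j \le n}$ consists of the $2 \times 2$ minors of
$$\begin{pmatrix} P_0 & P_1 & \cdots & P_n \\ g_0(P) & g_1(P) & \cdots & g_n(P) \end{pmatrix},$$
so it is literally the Plücker vector of the line $L_P \subseteq \mathbb{P}^n$ joining $P$ and $G(P) := (g_0(P) : \ldots : g_n(P))$. Note that these two points are genuinely distinct in $\mathbb{P}^n$ precisely because $P \notin E(T)$, so $L_P$ is well-defined and $\lambda_T(P)$ is nothing but the class $[L_P] \in \mathbb{G}(1,n) \subseteq \mathbb{P}(\Lambda^2 \mathbb{C}^{n+1})$.

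Given this interpretation, the proposition is almost tautological. If $\omega \in \mathbb{G}(1,n)$ corresponds to a line $L \subseteq \mathbb{P}^n$ and $P \in \lambda_T^{-1}(\omega)$, then the line $L_P$ through $P$ and $G(P)$ equals $L$; in particular $P \in L$, and therefore $\lambda_T^{-1}(\omega) \subseteq L$.

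To formalize the argument I would invoke the presentation of the graph ${\Bl}_{E(T)} \mathbb{P}^n \subseteq \mathbb{P}^n \times \mathbb{G}(1,n)$ from Remark 4.2. With $(a_i), (b_i)$ any Plücker representatives of $\omega$, the $\binom{n+1}{3}$ bidegree $(1,1)$ generators of the ideal of ${\Bl}_{E(T)} \mathbb{P}^n$ are the $3 \times 3$ minors of
$$\begin{pmatrix} a_0 & a_1 & \cdots & a_n \\ b_0 & b_1 & \cdots & b_n \\ P_0 & P_1 & \cdots & P_n \end{pmatrix},$$
i.e., exactly the linear incidence condition $P \in \langle (a_i), (b_i) \rangle = L$. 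Restricting to the fiber $p_2^{-1}(\omega)$ and projecting via $p_1$ yields $\lambda_T^{-1}(\omega) = p_1(p_2^{-1}(\omega)) \subseteq L$, as desired.

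There is no genuine obstacle here: the definition of $\lambda_T$ is tailored so that $\lambda_T(P)$ records the line $\overline{P\,G(P)}$, and the conclusion is built into this tautology. The only point to keep in mind is to work on $\mathbb{P}^n \setminus E(T)$ so that $P$ and $G(P)$ remain distinct and the line $L_P$ is unambiguous.
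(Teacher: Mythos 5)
Your proof is correct and follows essentially the same route as the paper: both rest on Remark \ref{rmk: contracted}'s identification of $\lambda_T(P)$ with the Pl\"ucker point of the line through $P$ and $(g_0(P):\ldots:g_n(P))$, together with the $3\times 3$-minor equations of the graph. The only difference is cosmetic — where the paper verifies that the coefficient matrix $A_\omega(v)=\omega\wedge v$ has rank $n-1$ for decomposable $\omega$, you invoke the equivalent tautology that a point of $\mG(1,n)$ determines its line, so the conclusion $P\in L$ is immediate.
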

\begin{proof}
Let $\omega\in\Ima(\lambda_T) \subseteq \mG(1,n)$. Then there exist $Q_0,\ldots,Q_n\in\C$ such that the homogeneous coordinates of $\omega$ are the $2\times 2$ minors of the matrix \eqref{main_matrix} evaluated at $Q=(Q_0:\ldots:Q_n)$. In other words, the Pl\"ucker coordinates of $\omega$ are $(\bar p_{ij}~:~ 0\le i<j\le n)$, where $\bar p_{ij}=Q_ig_j(Q)-Q_jg_i(Q)\in\C$. The fiber $p_2 ^{-1} (\omega)$ is contained in the subvariety of $\p^n\times\mG(1,n)$ whose equations in $P_0,\dots,P_n$ are the $3\times 3$ minors of the matrix
$$
\left(
	\begin{matrix}
	Q_0 & Q_1 & \dots & Q_n \\
	g_0(Q) & g_1(Q) & \dots & g_n(Q)\\
	P_0 & P_1 & \dots & P_n \\
	\end{matrix}\right).
	$$
For all $0\le i<j<k\le n$, we get
\begin{equation}\label{eq:lineari dopo la proiezione di p1}
\bar p_{ij} P_k - \bar p_{ik} P_j + \bar p_{jk} P_i=0.\end{equation}
Once we project on the first factor, we see that the fiber $\lambda_T^{-1}(\omega)=p_1(p_2^{-1}(\omega))\subseteq\p^n$ is contained in the linear subspace defined by the $\binom{n+1}{3}$ linear equations \eqref{eq:lineari dopo la proiezione di p1} in the variables $P_0,\dots,P_n$.  It is not difficult to see that the coefficient matrix $A_\omega$ of this linear system gives the linear map
$\mC^{n+1} \to \bigwedge^3 \mC^{n+1}$ defined by$$A_\omega (v)=\omega \wedge v.
	$$
For instance, for $n=3$ we get
$$
A_{\omega}=\left(
	\begin{matrix}
	\bar p_{12} &-\bar p_{02} &\bar p_{01}&0\\
	\bar p_{13} & -\bar p_{03} & 0 & \bar p_{01}\\
	\bar p_{23} &0  &-\bar p_{03}  & \bar p_{02}\\
	0 &\bar p_{23}  &-\bar p_{13}  &\bar p_{12} \\
	\end{matrix}\right).
$$	
By recalling that a 2-vector $\omega$ is decomposable if and only if $\dim \ker A_{\omega}=2$, we have that $\rank A_{\omega}=n-1$. It follows that $\lambda_T^{-1}(\omega)$ is contained in an intersection of $n-1$ hyperplanes in $\mP^n$, which is a line.
	\end{proof}

Equations \eqref{eq:lineari dopo la proiezione di p1} determine the projectivization of the tautological bundle of $\mG(1,n)$ inside $\p^n\times\mG(1,n)$.

Proposition \ref{pro: fibre} allows us to prove the main result of this section, thereby generalizing \cite[Theorem 5.5]{BGV}.
\begin{prop}\label{pro: no kd points on a curve}
Let $n\ge 2$, $d\ge 2$ and let $T\in(\C^{n+1})^{\otimes d}$ be a tensor such that $\dim E(T)=0$. Then for every $k\in\{2,\dots,d-1\}$, no degree $kd$ subscheme of $E(T)$ is contained in an irreducible curve of degree $k$. Moreover, no degree $d+1$ subscheme of $E(T)$ is contained in a line.
\end{prop}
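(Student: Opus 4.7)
My plan is to leverage Proposition \ref{pro: fibre} (fibers of the Laguerre map lie in lines) together with a Bezout-style count for the restrictions of the determinantal generators $f_{ij}$ to a curve.

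I would first handle the easy \emph{moreover} part ($k=1$): if a line $\ell$ contains $d+1$ eigenpoints (counted with multiplicity), then for each $0\le i<j\le n$ the restriction $f_{ij}|_\ell$ is a homogeneous polynomial of degree $d$ on $\ell\cong\p^1$ vanishing on a scheme of length $d+1$, and hence $f_{ij}|_\ell\equiv 0$. But then $\ell\subseteq V(f_{ij}~:~i<j)=E(T)$, contradicting $\dim E(T)=0$.

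For the main statement with $k\in\{2,\dots,d-1\}$, suppose for a contradiction that $Z\subseteq E(T)$ has degree $kd$ and lies on an irreducible curve $C\subseteq\p^n$ of degree $k$. Since $\dim E(T)=0$, we have $C\not\subseteq E(T)$, so there is at least one pair $(i,j)$ with $f_{ij}|_C\not\equiv 0$. For any such pair, $f_{ij}|_C$ is a non-zero section of $\sO_{\p^n}(d)|_C$ whose zero scheme on $C$ has length $kd$ by Bezout, and it contains the length-$kd$ subscheme $Z$; therefore its zero scheme on $C$ equals $Z$. Consequently, any two non-zero restrictions $f_{ij}|_C$ and $f_{k\ell}|_C$ have the same zero scheme on the integral curve $C$, and (working on the dense open smooth locus $C^{\mathrm{sm}}$, where $\sO_C(d)$ is a line bundle, and extending by the torsion-freeness of $\sO_C(d)$ on the integral scheme $C$) they are scalar multiples of one another.

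This means the Pl\"ucker tuple $(f_{ij}|_C~:~ 0\le i<j\le n)$ is a single point $\omega\in\mG(1,n)$, so the rational map $\lambda_T$ is constant on $C\setminus E(T)$ with value $\omega$. Thus the fiber $\lambda_T^{-1}(\omega)$ contains a dense open subset of $C$. By Proposition \ref{pro: fibre}, this fiber is contained in a line of $\p^n$, so the irreducible curve $C$ is contained in a line, which contradicts $\deg C=k\ge 2$. The main obstacle is the scheme-theoretic bookkeeping in the Bezout step, namely checking that on the possibly singular integral curve $C$ two sections of $\sO_C(d)$ with the same length-$kd$ zero subscheme really are proportional; as indicated, restricting to $C^{\mathrm{sm}}$ and invoking torsion-freeness takes care of this cleanly.
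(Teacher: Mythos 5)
Your proof is correct and follows essentially the same strategy as the paper: both parts reduce the main claim to showing that $\lambda_T$ contracts the curve $C$ via a B\'ezout count and then invoke Proposition \ref{pro: fibre}. The only differences are cosmetic — you get constancy of $\lambda_T$ on $C$ by showing all nonzero restrictions $f_{ij}|_C$ share the length-$kd$ zero scheme and are therefore proportional, whereas the paper argues dually that every member of the linear system through a general point of $C$ must contain $C$; and your self-contained treatment of the $d+1$ collinear points case replaces the paper's citation of \cite[Proposition 2.6(1)]{BGV}.
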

\begin{proof}
As usual, we can assume that $T\in\Sym^{d-1}\C^{n+1}\otimes \C^{n+1}$ is partially symmetric. In \cite[Proposition 2.6(1)]{BGV} we can see how $E(T)$ cannot contain $d+1$ collinear points, and the argument holds verbatim for a $0$-dimensional scheme of degree $d+1$. Assume that there exists a degree $kd$ subscheme of $E(T)$ contained in an irreducible curve $C \subseteq\mP^n$ of degree $k$. If we call $\LL=\langle f_{ij}~:~ 0\le i<j\le n\rangle$ the linear system on $\p^n$ associated with the Laguerre map $\lambda_T$, then we can interpret $\lambda_T:\p^n\dashrightarrow\p(\LL^*)$ as the map assigning to a point $P\in \mP^n \setminus E(T)$ the hyperplane
$$\LL_P=\{D\in\LL~:~ P\in D\}\subseteq\LL.$$
If $P\in C$, then every $D\in\LL_P$ contains $E(T)\cup\{P\}$, so it intersects $C$ with multiplicity at least $kd+1>\deg(C)\cdot\deg(D)$. Since $C$ is irreducible, we deduce that $C\subseteq D$ by  B\'ezout's Theorem. Hence
\[\lambda_T(P)=\LL_P=\{D\in\LL~:~ C\subseteq D\}
\]
is constant for every $P\in C\setminus E(T)$. It follows that $C$ is contracted by $\lambda_T$, and this implies that $k=1$ by Proposition \ref{pro: fibre}.
\end{proof}

The bound on collinear points given by Proposition \ref{pro: no kd points on a curve} is sharp. Indeed, it is easy to find order $d$ tensors whose eigenscheme contains $d$ points on a line, see for instance \cite[Example 4.2]{CGKS}.

\bibliographystyle{plain}

\end{document}